\newtheorem{theorem}{Theorem}[section]
\newtheorem{lemma}[theorem]{Lemma}
\newtheorem{corollary}[theorem]{Corollary}
\newtheorem{proposition}[theorem]{Proposition}
\theoremstyle{definition}
\newtheorem{remark}[theorem]{Remark}
\numberwithin{equation}{section}
\begin{document}

\baselineskip=15.5pt

\title[Fundamental group of a GIT quotient]{Fundamental group of a
geometric invariant theoretic quotient}

\author[I. Biswas]{Indranil Biswas}

\address{School of Mathematics, Tata Institute of Fundamental
Research, Homi Bhabha Road, Bombay 400005, India}

\email{indranil@math.tifr.res.in}

\author[A. Hogadi]{Amit Hogadi}

\address{School of Mathematics, Tata Institute of Fundamental
Research, Homi Bhabha Road, Bombay 400005, India}

\email{amit@math.tifr.res.in}

\author[A.J. Parameswaran]{A. J. Parameswaran}

\address{School of Mathematics, Tata Institute of Fundamental
Research, Homi Bhabha Road, Bombay 400005, India}

\email{param@math.tifr.res.in}

\subjclass[2000]{14L24, 14L30, 14F35}

\keywords{Group action, GIT, fundamental group, reductive group}

\date{}

\begin{abstract}
Let $M$ be an irreducible smooth projective variety, defined over an algebraically
closed field, equipped with an action of a connected reductive affine algebraic group 
$G$, and let ${\mathcal L}$ be a $G$--equivariant very ample line bundle on
$M$. Assume that the GIT quotient $M/\!\!/G$ is a nonempty set.
We prove that the homomorphism of algebraic fundamental groups
$\pi_1(M)\, \longrightarrow\, \pi_1(M/\!\!/G)$, induced by the rational map $M\,
\dasharrow\, M/\!\!/G$, is an isomorphism.

If $k\,=\, \mathbb C$, then we show that the above rational map $M\, \dasharrow
\, M/\!\!/G$ induces an isomorphism between the topological fundamental groups.
\end{abstract}

\maketitle

\section{Introduction}

Let $M$ be an irreducible smooth projective variety defined over an algebraically
closed field $k$ of arbitrary characteristic. Fix a very ample line bundle
$\mathcal L$ on $M$. Let $G$ be a connected reductive affine algebraic group over $k$
acting algebraically on both $M$ and $\mathcal L$ such that $\mathcal L$ is a
$G$--equivariant line bundle. The geometric invariant theoretic quotient
$M/\!\!/G$ for this action of $G$ on $(M\, ,{\mathcal L})$ will be denoted by
$X$. We assume that $X$ is nonempty. The quotienting produces a rational
morphism $M\, \dasharrow\, X$. This rational morphism produces a homomorphism
$$
h\, :\, \pi_1(M)\, \longrightarrow\, \pi_1(X)
$$
between the algebraic fundamental groups.

When $k\, =\, \mathbb C$, the above rational morphism $M\, \dasharrow\, X$
produces a homomorphism
$$
h'\, :\, \pi^t_1(M)\, \longrightarrow\, \pi^t_1(X)
$$
between the topological fundamental groups.

We prove the following (see Theorem \ref{thm1} and Theorem \ref{thm2}):

\begin{theorem}\label{thm0}
Let $G/k$ be a connected reductive algebraic affine group acting on a smooth connected
projective variety $M/k$.
\begin{enumerate}
\item The above homomorphism $h$ between the algebraic fundamental groups is an 
isomorphism.

\item If $k\,=\,\mathbb C$, the homomorphism $h'$ between the topological 
fundamental groups is an isomorphism.
\end{enumerate}
\end{theorem}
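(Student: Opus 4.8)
The plan is to replace the rational map by an honest morphism, extract surjectivity from connectedness of the quotient, and then reduce injectivity to a single geometric input that uses projectivity of $M$ in an essential way. Write $U:=M^{ss}$ for the semistable locus, $q\colon U\to X$ for the good quotient morphism, and $i\colon U\hookrightarrow M$ for the dense open inclusion; since $M$ is smooth and irreducible, $i_*\colon\pi_1(U)\to\pi_1(M)$ is surjective, and on $U$ the rational map is precisely the morphism $q$.

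First I would resolve $M\dashrightarrow X$. Let $\widetilde M$ be a smooth projective model of the closure of its graph in $M\times X$ (an alteration in place of a resolution in positive characteristic, with the usual adjustments), with proper projections $p\colon\widetilde M\to M$ (birational, an isomorphism over $U$) and $\widetilde f\colon\widetilde M\to X$ (surjective, restricting to $q$ on $U$). Since $p$ is a birational morphism of smooth projective varieties, $p_*\colon\pi_1(\widetilde M)\to\pi_1(M)$ is an isomorphism, and the dense open inclusion $j\colon U\hookrightarrow\widetilde M$ gives a surjection $j_*$ with $p_*j_*=i_*$ and $\widetilde f_*j_*=q_*$. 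The fibres of $\widetilde f$ are connected (each semistable fibre carries a unique closed orbit, the generic fibre is connected, and $X$ is normal, so Stein factorization is trivial), whence $\widetilde f_*$ is surjective — this is the SGA1 statement for $\pi_1^{et}$ and the corresponding fact from covering space theory for $\pi^t_1$. Setting $h:=\widetilde f_*\circ p_*^{-1}$ then produces a well-defined surjection $\pi_1(M)\to\pi_1(X)$ with $h\circ i_*=q_*$, agreeing with the map of the statement; in particular $\ker i_*\subseteq\ker q_*$.

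The heart of the argument, and the step that genuinely forces $M$ to be projective, is the vanishing of orbit loops: for every $x\in M$ the orbit map $G\to M,\ g\mapsto g\cdot x$, induces the trivial homomorphism $\pi_1(G\cdot x)\to\pi_1(M)$. I would deduce this from unirationality. The orbit closure $\overline{G\cdot x}$ is dominated by the rational variety $G$, hence is (separably) unirational, so a smooth projective model $Z$ of it is (separably) rationally connected and therefore satisfies $\pi_1(Z)=0$ for both the topological and the \'etale fundamental group; as $G\cdot x$ is smooth and open in $\overline{G\cdot x}$ it lifts into $Z$, so the orbit map factors through $\pi_1(Z)=0$. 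A more elementary version handles the image of $\pi_1(G)$ alone: $\pi_1(T)\twoheadrightarrow\pi_1(G)$ for a maximal torus $T$, and each cocharacter $\mathbb G_m\to T$ yields an orbit map $\mathbb G_m\to M$ which, $M$ being complete, extends to a morphism $\mathbb P^1\to M$ and hence annihilates $\pi_1(\mathbb G_m)$.

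Finally I would identify $\ker q_*$ and assemble. For a good quotient by a connected reductive group one has $\pi_1(X)=\pi_1(U)/K$, where $K$ is the normal subgroup generated by the image of $\pi_1$ of a general orbit — the descent statement that $G$-equivariant covers of $U$ are exactly those on which the general-orbit monodromy is trivial, and that such covers descend through the good quotient. By the previous step these orbit loops lie in $\ker i_*$, and normality gives $K=\ker q_*\subseteq\ker i_*$; combined with the reverse inclusion this yields $\ker q_*=\ker i_*$, so $h$ is injective and hence an isomorphism, uniformly in both settings. The main obstacle is exactly this injectivity: pinning down $\ker q_*$ for a good (not merely geometric) quotient, and making the orbit-loop vanishing robust enough to absorb nontrivial finite stabilizers on the stable locus — which is why unirationality of orbit closures, rather than the bare $\mathbb P^1$-argument for one-parameter subgroups, is the decisive ingredient, together with the need for \emph{separable} unirationality to force $\pi_1^{et}(Z)=0$ in positive characteristic.
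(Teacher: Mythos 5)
Your skeleton --- surjectivity from connected fibres, vanishing of orbit loops in $\pi_1(M)$, and an identification $\ker q_*=K$ with $K$ the normal closure of the orbit monodromy --- is the right one, and your orbit-loop argument via unirationality is essentially the paper's own argument in the topological/characteristic-zero setting (it factors the orbit inclusion through a smooth projective unirational compactification and quotes Serre). But the proposal has a genuine gap at exactly the point you yourself flag as the heart of the matter: the ``descent statement'' $\pi_1(X)=\pi_1(U)/K$ is asserted, not proved, and it is not an off-the-shelf fact for a good (non-geometric) quotient. Proving it is the bulk of the paper: one must (i) lift the $G$-action to an arbitrary connected \'etale cover $M'$ of $M$ (this is where the orbit-loop vanishing is actually used, via the K\"unneth decomposition $\pi_1(M\times G)=\pi_1(M)\times\pi_1(G)$), (ii) check the lifted action commutes with the Galois group, and (iii) show that the induced map on invariant rings $\operatorname{Spec}(B^G)\to\operatorname{Spec}(A^G)$ is again \'etale Galois of the same degree and that $B\cong A\otimes_{A^G}B^G$. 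Step (iii) is delicate: one needs that $B^G$ is the integral closure of $A^G$ in the fraction field of $B$, and that $\Gamma$ acts \emph{freely} on $\operatorname{Spec}(B^G)$, which is proved using the uniqueness of the closed orbit in each fibre together with the injectivity of the cover on closed orbits --- none of which follows from ``general-orbit monodromy is trivial'' without argument. Without (iii) you cannot conclude that a cover of $U$ with trivial orbit monodromy descends to an \emph{\'etale} cover of $X$, so $\ker q_*\subseteq K$ is unproved.

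Two further points would need repair. In positive characteristic your orbit-loop argument breaks for a general orbit: the orbit map $G\to G\cdot x$ need not be separable, a smooth projective model of $\overline{G\cdot x}$ need not exist, and separable unirationality is not guaranteed; the paper instead treats only the minimal (hence complete) orbit, where $P\backslash G\to G_{x}\backslash G$ is separable-modulo-a-purely-inseparable bijection and Koll\'ar's theorem applies to $P\backslash G$, and then propagates the vanishing to all orbits by the K\"unneth identification of the subgroups $\psi^x_*(\pi_1(G))$. (Your alteration in place of a resolution also destroys $p_*^{-1}$; the paper avoids resolving altogether by using purity on the codimension-two indeterminacy locus.) Finally, for part (2) your claim that the argument works ``uniformly in both settings'' ignores that an infinite topological cover $M'$ is not an algebraic variety, so GIT cannot be applied to it at all; the paper has to construct the cover of $X$ by hand, gluing quotients $\widetilde U_K/K_G$ of Neeman's $K_G$-invariant deformation retracts of the affine charts $\beta^{-1}(\mathcal U)$. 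These are not cosmetic adjustments but separate arguments that your proposal does not supply.
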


Hui Li proved that for an Hamiltonian action of a compact group on a compact
symplectic manifold $M$, the fundamental group of the symplectic quotient coincides
with $\pi_1(M)$ \cite[p. 346, Theorems 1.2, 1.3]{Li}. A theorem of Kirwan and
Kempf--Ness says that for a linear action of a complex reductive group on a smooth
complex projective variety, if the stabilizer of every semistable point is finite, then
the GIT quotient is homeomorphic to the symplectic quotient \cite[Theorem 7.5,
Remark 8.14]{Ki}, \cite{KN}.

\section{Lifting an action}

Let $k$ be an algebraically closed field. Let $G$ be a connected reductive affine
algebraic group over $k$ and $M/k$ an irreducible smooth projective variety equipped
with an algebraic action
$$
\theta\, :\, M\times G\, \longrightarrow\, M
$$
of $G$.
Continuing with the above notation we have the following proposition. 

\begin{proposition}\label{prop1}
Let $\varphi\, :\, M'\, \longrightarrow\, M$ be an \'etale Galois covering
morphism such that $M'$ is connected. Then there is a unique action of $G$ on $M'$
$$
\theta'\, :\, M'\times G\, \longrightarrow\, M'
$$
that lifts $\theta$, meaning
$\theta\circ (\varphi\times {\rm Id}_G)\,=\,\varphi\circ \theta'$.
\end{proposition}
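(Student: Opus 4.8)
The plan is to build $\theta'$ via the lifting criterion for finite étale covers and then upgrade the resulting morphism to a group action by a rigidity argument. Write $f\,:=\,\theta\circ(\varphi\times{\rm Id}_G)\,:\,M'\times G\,\longrightarrow\, M$ and fix a base point $m_0'\in M'$ with image $m_0=\varphi(m_0')$; note that $f(m_0',e)=m_0$ because $e$ is the identity. A morphism $\theta'\,:\,M'\times G\,\longrightarrow\, M'$ with $\varphi\circ\theta'=f$ is exactly a lift of $f$ through the covering $\varphi$, and since $M'\times G$ is connected and $\varphi$ is finite étale, such a lift exists (and is then unique once we demand $\theta'(m_0',e)=m_0'$) if and only if
$$
f_*\bigl(\pi_1(M'\times G,(m_0',e))\bigr)\,\subseteq\,\varphi_*\bigl(\pi_1(M',m_0')\bigr).
$$

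First I would reduce this inclusion to a statement about a single orbit map. Since $M$ is projective, the cover $M'$ is finite over $M$, hence proper; as $M'$ and $G$ are connected, the Künneth formula for the étale fundamental group gives $\pi_1(M'\times G)\,\cong\,\pi_1(M')\times\pi_1(G)$, and this group is generated by the images of $\pi_1(M'\times\{e\})$ and $\pi_1(\{m_0'\}\times G)$. On the first factor $f$ restricts to $\varphi$ (because $\theta(\,\cdot\,,e)={\rm Id}_M$), so $f_*$ carries it into $\varphi_*\pi_1(M')$ tautologically. On the second factor $f$ restricts to the orbit map $O\,:\,G\,\longrightarrow\,M$, $g\mapsto\theta(m_0,g)$. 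Thus everything comes down to showing that $O_*\bigl(\pi_1(G)\bigr)\subseteq\varphi_*\pi_1(M')$, equivalently that the pulled-back cover $O^*M'\to G$ splits.

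The heart of the matter, and the step I expect to be the main obstacle, is this triviality of the orbit monodromy; it is where connectedness and reductivity of $G$ together with properness of $M$ enter. I would argue through a maximal torus $T\subseteq G$, which makes the argument robust in all characteristics. Let $\rho\,:\,\pi_1(M)\twoheadrightarrow\Gamma$ be the quotient with kernel $\varphi_*\pi_1(M')$, where $\Gamma$ is the (finite) Galois group of $\varphi$. By the structure theory of reductive groups the inclusion induces a surjection $\pi_1(T)\twoheadrightarrow\pi_1(G)$, so it suffices to prove $\rho\circ O_*$ vanishes on the image of $\pi_1(T)$, i.e.\ on the torus orbit map $O\circ\iota\,:\,T\to M$, $t\mapsto\theta(m_0,t)$. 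Now $\pi_1(T)$ is topologically generated by the cocharacters $\mathbb{G}_m\to T$, and for each cocharacter $\lambda$ the composite $\mathbb{G}_m\to M$, $s\mapsto\theta(m_0,\lambda(s))$, extends to a morphism $\mathbb{P}^1\to M$ because $M$ is proper and $\mathbb{G}_m$ is an open subset of the smooth curve $\mathbb{P}^1$. Hence it factors through $\pi_1(\mathbb{P}^1)=1$ and is trivial on $\pi_1(\mathbb{G}_m)$. Since $\rho$ has finite (discrete) target and these cocharacter images are dense in $\pi_1(T)$, continuity forces $\rho\circ O_*\circ\iota_*=0$; surjectivity of $\iota_*$ then gives $\rho\circ O_*=0$, i.e.\ $O_*\pi_1(G)\subseteq\varphi_*\pi_1(M')$. (In characteristic zero one may argue more directly: a connected reductive group is rational, so a smooth projective compactification $\overline{G}$ is simply connected, $O$ extends over the complement of a codimension $\ge 2$ locus by properness of $M$, and Zariski--Nagata purity gives $\pi_1(U)\cong\pi_1(\overline{G})=1$, whence $O_*=0$.) With the inclusion established, the lift $\theta'$ exists.

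It remains to check that $\theta'$ is a group action and that it is unique, both of which follow from the rigidity of lifts: two morphisms from a connected variety to $M$ that lift a common morphism through $\varphi$ and agree at one point must coincide. For uniqueness, any action lifting $\theta$ satisfies $\theta'(m',e)=m'$, so any two such lifts agree at $(m_0',e)$ and hence everywhere. For the action axioms, $\theta'\circ(\theta'\times{\rm Id}_G)$ and $\theta'\circ({\rm Id}_{M'}\times\mu)$, where $\mu$ is the multiplication of $G$, both lift $\theta\circ(\theta\times{\rm Id}_G)$ over the connected $M'\times G\times G$ and agree at $(m_0',e,e)$, so they are equal; similarly $\theta'(\,\cdot\,,e)$ and ${\rm Id}_{M'}$ both lift $\varphi$ and agree at $m_0'$, giving the identity axiom. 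This completes the plan.
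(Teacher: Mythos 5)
Your overall architecture (Künneth for $\pi_1(M'\times G)$ using properness of $M'$, reduction to the vanishing of the orbit monodromy, then rigidity of lifts for uniqueness and the action axioms) matches the paper's, and your closing steps are fine. The problem is the step you yourself flag as the heart of the matter. Your main argument for $O_*\pi_1(G)\subseteq\varphi_*\pi_1(M')$ is advertised as ``robust in all characteristics,'' but it is not: it rests on two claims about the \emph{\'etale} fundamental group that are only justified topologically over $\mathbb C$. First, the surjectivity of $\pi_1(T)\to\pi_1(G)$ is a statement about $\pi_1^{\mathrm{top}}$ of compact/complex reductive groups; for the \'etale fundamental group in characteristic $p$ you give no argument, and the usual route (big Bruhat cell) only yields surjectivity of $\pi_1(\mathbb A^N\times T)\to\pi_1(G)$, which does not reduce to $\pi_1(T)$ because K\"unneth fails for non-proper varieties in characteristic $p$. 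Second, and more decisively, the claim that cocharacter images are dense in $\pi_1(T)$ is false in characteristic $p$: $\pi_1(\mathbb G_m^n)$ is not (topologically) generated by the images of maps from $\mathbb G_m$, again because of wild (Artin--Schreier type) covers and the failure of K\"unneth for affine varieties. So your argument proves the proposition over $\mathbb C$ and, via your correct parenthetical (rationality of $G$, simple connectedness of a smooth compactification in characteristic $0$, and Zariski--Nagata purity), in characteristic $0$ --- but the paper works over an algebraically closed field of arbitrary characteristic, and there your proof has a genuine gap.

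For comparison, the paper's route is designed precisely to avoid computing $\pi_1(G)$ or $\pi_1(T)$. It first chooses a point $x$ whose orbit has minimal dimension, so that the orbit is complete; the reduced stabilizer $P=G_{x,\mathrm{red}}$ is then parabolic, $P\backslash G$ is separably rationally connected (Bruhat decomposition plus separability of $G\to P\backslash G$) and hence simply connected by Koll\'ar's theorem, and the bijective morphism $P\backslash G\to G_x\backslash G=\theta(x,G)$ transfers simple connectedness to the orbit, giving $\varphi^x_*=0$ there. It then propagates this vanishing to an arbitrary base point $y$ by observing that, in the K\"unneth decomposition $\pi_1(M\times G)=\pi_1(M)\times\pi_1(G)$, the subgroup $\{1\}\times\pi_1(G)$ is canonically preserved under change of base point, so $\theta_*$ killing it at $(x,e)$ forces the same at $(y,e)$. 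If you want a characteristic-free proof, you need some substitute of this kind (completeness of a special orbit plus separable rational connectedness, together with the transport step), rather than the torus--cocharacter density argument.
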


\begin{proof}
Since $M'\times G$ is connected, for any two lifts $\theta_1\, , \theta_2\, :\, M'
\times G\, \longrightarrow\, M'$ of the morphism $\theta$, there is a deck
transformation $\gamma\, \in\, \text{Gal}(\varphi)$ such that $\theta_2\,=\,
\gamma\circ \theta_1$. On the other hand, for an action $\theta'$ of $G$ on $M'$, we
have $\theta'(z\, ,e)\, =\, z$ for all $z\, \in\, M'$, where $e\, \in\, G$ denotes
the identity element. Therefore, there can be at most one action $\theta'$ of $G$
satisfying the condition that it lifts $\theta$.

We will now prove the existence of a lifted action.
For any closed point $x\, \in\, M$, let
$$
\varphi^x\, :\, G\, \longrightarrow\, M
$$
be the morphism defined by $g\, \longmapsto\, \theta (x\, , g)$. Consider
the induced homomorphism of algebraic fundamental groups
\begin{equation}\label{e1}
\varphi^x_*\, :\, \pi_1(G,\, e)\, \longrightarrow\, \pi_1(M, \,x)\, .
\end{equation}
We will show that
\begin{equation}\label{e2}
\varphi^x_*\,=\, 0\, .
\end{equation}

To prove \eqref{e2}, first take $x$ to be such that its orbit $\theta(x\, , 
,G)\,\subset\, M$ is of minimal dimension among all the $G$--orbits in $M$. 
Since the boundary of $\theta(x\, ,G)$
$$
\overline{\theta(x\, ,G)} \setminus \theta(x\, ,G)\,\subset\, M
$$
is preserved by the action of $G$, the condition that the
dimension of $\theta(x\, ,G)$ is the minimum one implies that this boundary
is empty. In other words, the orbit $\theta(x\, ,G)$ is a complete subvariety 
of $M$. Let $G_x\, \subset\, G$ be the isotropy group-scheme for the point $x$
(we note that $G_x$ need not be reduced). The reduced group
\begin{equation}\label{ing}
P\, :=\, G_{x, {\rm red}}\, \subset\, G_x\, \subset\, G
\end{equation}
is a parabolic subgroup of $G$ because $\theta(x\, ,G)$ is complete. Consider the
natural projection 
\begin{equation}\label{xi}
\xi\, :\, P\backslash G\, \longrightarrow\, G_x\backslash G\,=\, \theta(x\, ,G)
\end{equation}
given by the first inclusion in \eqref{ing}. Any solvable algebraic group defined over $k$ is
isomorphic as a variety (not as an algebraic group) to a product of copies of ${\mathbb A}^1_k$ and
${\mathbb G}_m$ (recall that $k$ is algebraically closed). Therefore, from the Bruhat decomposition
of $G$ it
follows that the variety $G$ is rational. Combining this with the fact that the quotient morphism $G\,
\longrightarrow\, P\backslash G$ is separable we conclude that $P\backslash G$ is separably rationally
connected. This implies that the variety $P\backslash G$ is simply connected \cite[p. 75, Theorem 13]{Ko}.

Let
$$
\nu\, :\, \widetilde{C}\,\longrightarrow\, \theta(x\, ,G)
$$
be a finite \'etale Galois covering. Let
\begin{equation}\label{c1}
\begin{matrix}
(P\backslash G)\times_{\theta(x\, ,G)} \widetilde{C} & \stackrel{\delta}{\longrightarrow}
& \widetilde{C}\\
~ \Big\downarrow \mu && \Big\downarrow\\
P\backslash G& \stackrel{\xi}{\longrightarrow} &  \theta(x\, ,G)
\end{matrix}
\end{equation}
be the pullback of the covering $\nu$ to $P\backslash G$ by the morphism $\xi$ in \eqref{xi}. Since
$P\backslash G$ is simply connected, the covering $\mu$ in \eqref{c1} admits a section
$$
\eta\, :\, P\backslash G\, \longrightarrow\, (P\backslash G)\times_{\theta(x\, ,G)} \widetilde{C}\, .
$$
Now consider the composition
$$
\delta\circ \eta\, :\, P\backslash G\, \longrightarrow\, \widetilde{C}\, ,
$$
where $\delta$ is the projection in \eqref{c1}. The image of $\delta\circ \eta$ is a connected
component of $\widetilde{C}$ that projects isomorphically to $\theta(x\, ,G)$. Indeed,
this follows immediately from the fact that $\xi$ in \eqref{xi} is bijective
on closed points. Hence we conclude that
$$
\pi_1(\theta(x\, ,G),\, x)\,=\, 0\, .
$$
Therefore, \eqref{e2} holds for this point $x$.

For another closed point $y$ of $M$, let $\varphi^y\, :\, G\, \longrightarrow\, 
M$ be the morphism defined by $g\, \longmapsto\, \theta (y\, , g)$. Let
$$
\psi^x\, :\, G\,\longrightarrow\, M\times G \   ~ \text{ and }\  ~
\psi^y\, :\, G\,\longrightarrow\, M\times G
$$
be the embeddings defined by $g\,\longmapsto\, (x\, ,g)$ and
$g\,\longmapsto\, (y\, ,g)$ respectively. So, we 
have $\varphi^x\,=\,\theta\circ\psi^x$
and $\varphi^y\,=\,\theta\circ\psi^y$. Therefore, 
$\varphi^x_*\,=\,\theta_*\circ\psi^x_*$
and $\varphi^y_*\,=\,\theta_*\circ\psi^y_*$, where $\varphi^x_*$ is the
homomorphism in \eqref{e1}. Since $M$ is projective,
$$\pi_1(M\times G,\, (y\, ,e))\,=\, \pi_1(M,\, y)\times \pi_1(G,\, e)$$
\cite[Expos\'e~X, \S~1, Corollaire~5.1]{sga1}. The two groups
$\pi_1(M\times G,\, (y\, ,e))$ and $\pi_1(M\times G,\, (x\, ,e))$ are identified
up to conjugation, and such an identification takes
the image $\psi^x_*(\pi_1(G,\, e))$
to $\psi^y_*(\pi_1(G,\, e))$. These imply that \eqref{e2} holds for the point $y$
because it holds for the point $x$. Therefore, \eqref{e2} holds for all $x$.

Take $M'$ in the statement of the proposition. Since $M'$ is projective, we have
$$\pi_1(M'\times G,\, (x'\, ,e))\,=\, \pi_1(M',\, x')\times \pi_1(G,\, e)$$
\cite[Expos\'e~X, \S~1, Corollaire~5.1]{sga1}. Consider the diagram
$$
\begin{matrix}
M'\times G && M'\\
~\,~\, ~\, ~\, ~\, ~\, ~\, ~\, ~\, ~\,~\,
\Big\downarrow \varphi\times {\rm Id}_G && ~\Big\downarrow \varphi\\
M\times G &\stackrel{\theta}{\longrightarrow} & M
\end{matrix}
$$
Take any closed point $x'\, \in\, \varphi^{-1}(x)$. Now consider the induced 
homomorphism of algebraic fundamental groups
$$
(\theta\circ (\varphi\times {\rm Id}_G))_*\, :\, \pi_1(M'\times G,\, (x'\, ,e))
\,=\, \pi_1(M',\, x')\times \pi_1(G,\, e)\,\longrightarrow\, \pi_1(M,\, x)\, .
$$
{}From \eqref{e2} it follows immediately that we have
$$
(\theta\circ (\varphi\times {\rm Id}_G))_*(\{e_0\}\times \pi_1(G,\, e))\,=\, 0\, ,
$$
where $e_0\, \in\, \pi_1(M',\, x')$ denotes the identity element. Consequently,
the image of $(\theta\circ (\varphi\times {\rm Id}_G))_*$
coincides with the image of the homomorphism
$$
\varphi_*\, :\, \pi_1(M', \,x')\,\longrightarrow\, \pi_1(M,\, x)\, .
$$
This implies that the pulled back Galois \'etale covering $$(M'\times G)\times_M 
M' \,\longrightarrow\, M'\times G$$ is identified with the trivial covering
$M'\times G\times \varphi^{-1}(x)\,\longrightarrow\, M'\times G$.
Consequently, there is a unique morphism
\begin{equation}\label{e3}
\theta'\, :\, M'\times G\, \longrightarrow\, M'
\end{equation}
such that the following two conditions hold:
\begin{equation}\label{g3}
\varphi\circ \theta'\,=\, \theta\circ (\varphi\times {\rm Id}_G)
\end{equation}
and $\theta'(x'\, , e)\,=\, x'$.

In view of \eqref{g3}, the morphism $\theta'_e\, :\, M'\, \longrightarrow\, M'$
defined by $y\, \longmapsto\,\theta'(y\, , e)$ is a lift of the identity map of $M$
because $\theta(z\, ,e)\,=\, z$ for all $z\, \in\, M$. Therefore, from the given
condition that $\theta'(x'\, , e)\,=\, x'$ it follows that $\theta'_e\,=
\,\text{Id}_{M'}$.

Next consider the two morphisms
$$
a\, , b\, :\, M'\times G\times G \, \longrightarrow\, M'
$$
defined by $(z\, , g_1\, ,g_2)\, \longmapsto\, \theta'(\theta'(z\, ,g_1)\, ,g_2)$
and $(z\, , g_1\, ,g_2)\, \longmapsto\, \theta'(z\, ,g_1g_2)$ respectively. The morphism
$a$ (respectively, $b$) is a lift of the morphism
$M\times G\times G \, \longrightarrow\, M$ defined by $(z\, , g_1\, ,g_2)\, \longmapsto\,
\theta(\theta(z\, ,g_1)\, ,g_2)$ (respectively, $(z\, , g_1\, ,g_2)\, \longmapsto\,
\theta(z\, ,g_1g_2)$). These two morphisms from $M\times G\times G$ to $M$ coincide
because $\theta$ is an action of $G$ on $M$. Also,
$$
a(z\, ,e\, ,e)\,=\, z\,=\, b(z\, ,e\, ,e)
$$
for all $z\, \in\, M'$. Therefore, we conclude that $a\,=\, b$. Consequently,
the morphism $\theta'$ defines an action of $G$ on $M'$.
\end{proof}

Let $\Gamma\, =\, \text{Gal}(\varphi)\, \subset\, \text{Aut}(M')$ be the Galois group
for the Galois covering $\varphi$.

\begin{lemma}\label{lem1}
The Galois action of $\Gamma$ on $M'$ commutes with the action of $G$ on $M'$ given
by $\theta'$ in Proposition \ref{prop1}.
\end{lemma}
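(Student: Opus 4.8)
The plan is to fix a deck transformation $\gamma\, \in\, \Gamma$ and to prove directly that the two morphisms $M'\times G\,\longrightarrow\, M'$ given by $(z\, ,g)\,\longmapsto\,\gamma(\theta'(z\, ,g))$ and $(z\, ,g)\,\longmapsto\,\theta'(\gamma(z)\, ,g)$ coincide. Writing $\alpha\,:=\,\gamma\circ\theta'$ and $\beta\,:=\,\theta'\circ(\gamma\times{\rm Id}_G)$, the commutativity asserted in the lemma is exactly the identity $\alpha\,=\,\beta$; and since $\gamma$ will be arbitrary, it suffices to establish this for a single $\gamma$.

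First I would check that $\alpha$ and $\beta$ are both lifts, through $\varphi$, of one and the same morphism $M'\times G\,\longrightarrow\, M$. As $\gamma$ is a deck transformation we have $\varphi\circ\gamma\,=\,\varphi$, and using the defining property \eqref{g3} of $\theta'$, namely $\varphi\circ\theta'\,=\,\theta\circ(\varphi\times{\rm Id}_G)$, one computes $\varphi\circ\alpha\,=\,\varphi\circ\theta'\,=\,\theta\circ(\varphi\times{\rm Id}_G)$ and likewise $\varphi\circ\beta\,=\,\theta\circ((\varphi\circ\gamma)\times{\rm Id}_G)\,=\,\theta\circ(\varphi\times{\rm Id}_G)$. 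Hence $\varphi\circ\alpha\,=\,\varphi\circ\beta$, so both $\alpha$ and $\beta$ lift the morphism $(z\, ,g)\,\longmapsto\,\theta(\varphi(z)\, ,g)$.

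Now I would reuse the connectedness argument from the opening of the proof of Proposition \ref{prop1}: because $M'\times G$ is connected and $\varphi$ is an \'etale Galois covering, any two lifts of a fixed morphism differ by a unique element of $\Gamma$. Thus there is a $\gamma'\, \in\,\Gamma$ with $\alpha\,=\,\gamma'\circ\beta$. To see that $\gamma'\,=\,e$, I would evaluate on the slice $M'\times\{e\}$: since $\theta'(z\, ,e)\,=\, z$ for all $z$ (this is the identity $\theta'_e\,=\,{\rm Id}_{M'}$ obtained in Proposition \ref{prop1}), we get $\alpha(z\, ,e)\,=\,\gamma(z)\,=\,\beta(z\, ,e)$ for every closed point $z$. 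Picking any such $z_0$, the relation $\alpha(z_0\, ,e)\,=\,\gamma'(\beta(z_0\, ,e))$ forces $\gamma'$ to fix the point $\gamma(z_0)$; as a deck transformation of a connected covering fixing a closed point is the identity, we conclude $\gamma'\,=\,e$ and therefore $\alpha\,=\,\beta$.

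I do not expect a genuine obstacle here; the only point demanding care is the verification that $\alpha$ and $\beta$ really are lifts of the \emph{same} morphism to $M$, since it is precisely this that licenses the rigidity of lifts on the connected scheme $M'\times G$ and reduces the whole statement to a one-point computation on the slice $M'\times\{e\}$.
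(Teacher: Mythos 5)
Your proof is correct and is essentially the paper's argument: the paper forms $\gamma''(z,g)=\gamma(\theta'(\gamma^{-1}(z),g))$, notes it is an action of $G$ lifting $\theta$, and invokes the uniqueness clause of Proposition \ref{prop1}, which is itself established by exactly the rigidity-of-lifts plus evaluation-at-$e$ argument you spell out inline. The only cosmetic difference is that you compare $\gamma\circ\theta'$ and $\theta'\circ(\gamma\times{\rm Id}_G)$ directly as lifts of the same morphism instead of routing through the uniqueness of the lifted \emph{action}, which spares you from checking that the conjugated map is again an action.
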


\begin{proof}
Take any $\gamma\, \in\, \Gamma$. The morphism
$$
\gamma''\, :\, M'\times G\, \longrightarrow\, M'\, , ~\ (z\, ,g)\, \longmapsto\,
\gamma(\theta'(\gamma^{-1}(z)\, ,g))
$$
is an action of $G$ on $M'$ that lifts the action $\theta$ of $G$ on $M$. Now from
the uniqueness of $\theta'$ it follows that $\gamma''\,=\, \theta'$. This immediately
implies that the actions of $\Gamma$ and $G$ on $M'$ commute.
\end{proof}

\section{Fundamental group of the quotient}

Let $\mathcal L$ be a $G$--equivariant very ample line bundle on $M$. The action of any
$g\, \in\, G$ on any $v\, \in\, \mathcal L$ will be denoted by $v\cdot g$. Let
\begin{equation}\label{e4}
X\, :=\, M/\!\!/G
\end{equation}
be the geometric invariant theoretic (GIT) quotient of $M$ for the action of $G$ on
$(M\, ,{\mathcal L})$ \cite{Mu}. We assume that $X$ is nonempty.
This $X$ is an irreducible normal projective variety.
Let $U\, \subset\, M$ be the largest Zariski open subset over which the rational
map to the GIT quotient $$M\,\dasharrow\, X$$ is defined. Consider the homomorphism
\begin{equation}\label{e8}
\pi_1(U,\, u_0)\, \longrightarrow\, \pi_1(X,\, x_0)
\end{equation}
induced by the quotient map, where $u_0\, \in\, U$ is a point 
lying over a point $x_0\, \in\, X$. The codimension of the complement $M\setminus U
\, \subset\, M$ is at least two because this complement is the
indeterminacy locus of a rational morphism. Since $M$ is smooth, this
codimension condition implies that the homomorphism
$$
\pi_1(U,\, u_0)\, \longrightarrow\, \pi_1(M,\, u_0)\, ,
$$
induced by the inclusion map $U\, \hookrightarrow\, M$, is an isomorphism. Using
this isomorphism, the homomorphism in \eqref{e8} produces a homomorphism
\begin{equation}\label{f1}
h\, :\, \pi_1(M,\, u_0)\, \longrightarrow\, \pi_1(X,\, x_0)\, .
\end{equation}

Take $(M'\, , \varphi)$ as in Proposition \ref{prop1}.
Consider the ample line bundle $\varphi^*\mathcal L$ on the covering $M'$ and
the action $\theta'$ of $G$ on $M'$ (constructed in Proposition \ref{prop1}). Since
$\theta'$ is a lift of the action $\theta$, the action of $G$ on $\mathcal L$
produces an action of $G$ on $\varphi^*\mathcal L$. The action
of any $g\, \in\, G$ sends any $v\, \in\, (\varphi^*\mathcal L)_x$ to the element
in $(\varphi^*\mathcal L)_{\theta'(x,g)}$ that corresponds to $v\cdot g$
by the natural identification ${\mathcal L}_{\theta(\varphi(x),g)}\,=\,
(\varphi^*\mathcal L)_{\theta'(x,g)}$ after we consider $v$ as an element of
${\mathcal L}_{\varphi(x)}$ using the identification $(\varphi^*\mathcal L)_{x}\,=\,
{\mathcal L}_{\varphi(x)}$. This action of $G$
on $\varphi^*\mathcal L$ evidently lifts the action $\theta'$. Let 
\begin{equation}\label{e5}
{\widetilde X}'\, :=\, M'/\!\!/G
\end{equation}
be the GIT quotient for the action of $G$ on $(M'\, ,\varphi^*{\mathcal L})$.

Let $M^{ss}\,\subset\, M$ be the semistable locus for the action of $G$; it
is an open subscheme of $M$. Let
$$
{\widetilde M}\, :=\, \varphi^{-1}(M^{ss})\, \subset\, M'
$$
be the inverse image. We note that the subset ${\widetilde M}$ is left invariant
under the action of $G$ on $M'$ because $M^{ss}$ is preserved by the action of $G$
on $M$ and $\varphi$ is $G$--equivariant.

There is a finite collection of $G$--invariant nonzero sections
$\{s_i\}_{i=1}^N$ of ${\mathcal L}$ such that the collection
$$
U_i\,:=\,{\rm Spec}\, A_i \, =\, \{z\, \in\, M\,\mid\, s_i(z)\, \not=\, 0\}
$$
is an affine open cover of $M^{ss}$. Since $s_i$ is $G$--invariant, the subset
$U_i$ is preserved by the action of $G$ on $M$. The GIT quotient
$X\,=\,M/\!\!/G$ is obtained by patching together the
affine open subschemes $V_i\,=\, {\rm Spec}(A_i^G)$ (see \cite[Ch.~3, \S~3]{New}
and \cite[Ch.~3, \S~4]{New} for affine and projective GIT quotients respectively).

Consider the affine open cover of ${\widetilde M}$
$$
U'_i\,:= \,\varphi^{-1}(U_i)\,=\, {\rm Spec}\, B_i\,=\,
\{z\, \in\, {\widetilde M}\,\mid\, \varphi^*s_i(z)\, \not=\, 0\}\,
\subset\, {\widetilde M}\, .
$$
Note that each $U'_i$ is preserved by the action of $G$ on $M'$.
We may patch together the affine open subschemes $V'_i\,:=\,
{\rm Spec} (B_i^G)$ to construct a quotient $\widetilde X$ (see
the proof of Theorem 1.10 in \cite[p. 38]{Mu}). Clearly,
$$
{\widetilde X} \,\subset\, {\widetilde X}'
$$
is an open subscheme, where ${\widetilde X}'$ is the quotient in
\eqref{e5}.

\begin{proposition}\label{prop2}
The natural morphism 
$$
f\, :\, {\widetilde X}\, \longrightarrow\, X
$$
is an \'etale Galois covering with Galois group $\Gamma\,=\,
{\rm Gal}(\varphi)$. Moreover the
restriction
$$
\varphi\vert_{\widetilde M}\,:\, {\widetilde M} \,\longrightarrow\, M^{ss}
$$
is the pullback of $f$ via the quotient map $q\,:\,
M^{ss}\,\longrightarrow\, M/\!\!/G\,=\, X$.
\end{proposition}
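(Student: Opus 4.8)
The plan is to prove both assertions at once by working on the affine charts $V_i\,=\,{\rm Spec}(A_i^G)$ of $X$ and their preimages $V_i'\,=\,{\rm Spec}(B_i^G)$, reducing everything to one local statement. Since $\varphi$ restricts to a finite \'etale Galois covering $U_i'\,\to\, U_i$ with group $\Gamma$, we have $B_i^\Gamma\,=\,A_i$; as the actions of $G$ and $\Gamma$ on $B_i$ commute (Lemma \ref{lem1}), the group $\Gamma$ preserves $B_i^G$ and
$$
(B_i^G)^\Gamma\,=\,(B_i^\Gamma)^G\,=\,A_i^G\, .
$$
Hence $\Gamma$ acts on $\widetilde X$ with invariant subring $\mathcal O_X$ locally, so that $X$ is set-theoretically the quotient $\widetilde X/\Gamma$. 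The entire proposition then follows from the single local claim that the natural square
$$
\begin{matrix}
U_i' & \longrightarrow & V_i'\\
\big\downarrow && \big\downarrow\\
U_i & \longrightarrow & V_i
\end{matrix}
$$
is Cartesian, equivalently that the multiplication map $A_i\otimes_{A_i^G}B_i^G\,\to\, B_i$ is an isomorphism: the collection of these squares is exactly the asserted identification of $\varphi\vert_{\widetilde M}$ with the pullback of $f$ along $q$.

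Granting the Cartesian identity, I would deduce that $f$ is \'etale and Galois with group $\Gamma$ by transporting the Galois relation upstairs. The covering $U_i'\,\to\,U_i$ gives a $G$--equivariant isomorphism $B_i\otimes_{A_i}B_i\,\cong\,\prod_{\gamma\in\Gamma}B_i$ (equivariant for the diagonal $G$--action because $G$ commutes with $\Gamma$). Substituting $B_i\,\cong\,A_i\otimes_{A_i^G}B_i^G$ rewrites this as an isomorphism of $A_i\otimes_{A_i^G}(-)$ applied to $B_i^G\otimes_{A_i^G}B_i^G$ and to $\prod_{\gamma\in\Gamma}B_i^G$. Now take $G$--invariants: since $G$ acts only through the factor $A_i$, the isotypic decomposition of $A_i$ as an $A_i^G$--module shows that $(A_i\otimes_{A_i^G}W)^G\,=\,A_i^G\otimes_{A_i^G}W\,=\,W$ for every $A_i^G$--module $W$ with trivial $G$--action. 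This yields $B_i^G\otimes_{A_i^G}B_i^G\,\cong\,\prod_{\gamma\in\Gamma}B_i^G$, which is precisely the condition that $A_i^G\,\to\,B_i^G$ be \'etale and $\Gamma$--Galois; patching the charts shows $f$ is an \'etale Galois covering with group $\Gamma$, and the same squares assemble into $\widetilde M\,\cong\, M^{ss}\times_X\widetilde X$.

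The Cartesian identity itself is the main obstacle. Geometrically it asserts that $\varphi$ is trivial over every closed (polystable) orbit $O\,=\,G/G_p$ in $M^{ss}$: writing $\varphi^{-1}(O)\,=\,G\times_{G_p}\varphi^{-1}(p)$, it is equivalent to the finite group $\pi_0(G_p)$ acting trivially on the fibre $\varphi^{-1}(p)$, i.e.\ to the vanishing of the monodromy $\pi_1(O)\,\to\,\Gamma$. Proposition \ref{prop1} alone does not give this: the vanishing $\varphi^x_*\,=\,0$ of \eqref{e2} kills only the image of $\pi_1(G,e)\,\to\,\pi_1(O)$, whose cokernel is exactly $\pi_0(G_p)$, so the residual monodromy on $\pi_0(G_p)$ survives (note that the affine orbit $O$ may be multiply connected, e.g.\ $\mathrm{SL}_2/N(T)$). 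Closing this gap is where I expect the real work to lie, and it must use the projectivity of $M$ rather than the geometry of a single orbit. The natural route is a specialization argument: the closure $\overline O\subset M$ contains a complete, hence simply connected, orbit $G/Q$ with $Q$ parabolic, and a one--parameter subgroup degenerating $O$ into $G/Q$ contracts every loop of $O$ inside $M$, forcing $\pi_1(O)\,\to\,\pi_1(M)\,\to\,\Gamma$ to vanish. Once the monodromy is trivial over each closed orbit, the map $A_i\otimes_{A_i^G}B_i^G\,\to\, B_i$ is an isomorphism on fibres over $X$ and hence an isomorphism, which completes the argument.
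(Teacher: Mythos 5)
You are right to be suspicious of the sentence ``From \eqref{e2} it follows immediately that the restriction of the covering $\varphi$ to any orbit is trivial'': \eqref{e2} only kills the image of $\pi_1(G,e)$ in $\pi_1(M)$, and the cokernel of $\pi_1(G,e)\to\pi_1(\theta(p,G))$ is governed by $\pi_0(G_p)$, so the residual monodromy you describe is a genuine issue that the paper treats very tersely. But flagging the difficulty is not resolving it, and your proposed resolution is only a heuristic that does not work as stated: a one-parameter subgroup degenerating the base point into a complete orbit $G/Q$ is not a deformation of the whole orbit inside $M$ (the limit $\lim_{t\to 0}\theta(p,\lambda(t))$ depends discontinuously on $p$), and for the \'etale fundamental group there are no loops to ``contract'' in any case. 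What is actually needed is an argument at the level of deck transformations: if $\gamma\in\Gamma$, $\gamma\neq e$, stabilizes a $G$--orbit $O'\subset M'$, then $\gamma$ acts $G$--equivariantly on $O'$ and on its closure in the projective variety $M'$, and one must exhibit a point of a complete orbit $G/P$ in that closure fixed by $\gamma$ ($G$--equivariant automorphisms of $G/P$ are trivial since parabolics are self-normalizing), contradicting freeness of the Galois action. Until something of this kind is written down, your Cartesian identity --- and with it the whole proof --- is unproved. Note also that Lemma \ref{affine} only needs injectivity of $\varphi$ on \emph{closed} orbits of the affine charts, which slightly eases the task.

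On the rest, your architecture is close to the paper's but with the two halves in the opposite order, and the reversal costs you in positive characteristic. The paper first proves that $f$ is \'etale: $B^G$ is the integral closure of $A^G$ in $L$ (so ${\rm Spec}(B^G)$ is normal), $(B^G)^\Gamma=A^G$ by commutativity of the actions (Lemma \ref{lem1}), and $\Gamma$ acts freely on closed points of ${\rm Spec}(B^G)$ --- freeness being exactly injectivity of $\varphi$ on closed orbits --- whence the quotient of a normal variety by a free finite action is \'etale. Only then is the Cartesian property deduced, by a degree count along $A\hookrightarrow A\otimes_{A^G}B^G\hookrightarrow B$ with both composites finite \'etale over $A$ of fibre cardinality $|\Gamma|$. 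You instead derive \'etaleness from the Cartesian identity by applying $(-)^G$ to $B_i\otimes_{A_i}B_i\cong\prod_{\gamma\in\Gamma}B_i$, which rests on $(A_i\otimes_{A_i^G}W)^G=W$ for trivial $G$--modules $W$. That is a Reynolds-operator/linear-reductivity statement: fine over $\mathbb C$, but false in general for reductive groups in characteristic $p$, whereas the proposition is asserted in arbitrary characteristic. The paper's ``integral closure plus free action'' route is designed to avoid exactly this.
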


\begin{proof}
{}From \eqref{e2} it follows immediately that the restriction of the covering
$\varphi$ to any orbit $\theta(x\, ,G)\, \subset\, M$ is trivial.
In other words, the inverse image $\varphi^{-1}(\theta(x\, ,G))$ is a disjoint
union of copies of $\theta(x\, ,G)$. In view of Lemma \ref{lem1}, this implies
that the Galois group $\Gamma$ acts simply transitively on the set of 
connected components of $\varphi^{-1}(\theta(x\, ,G))$. In particular, for any
$y\,\in\, M'$, the restriction of $\varphi$ to the orbit $\theta'(y\, ,G)$
is injective.

Therefore, to prove the proposition, we may replace $M$ by
the spectrum of an integral finite type algebra $A$, with quotient field
$K$, equipped with an action of $G$. Similarly, the variety ${\widetilde M}$ and
the action of $G$ on it are replaced by a connected finite \'etale algebra
$B$, with the quotient field $L$, over $A$ with Galois group $\Gamma$,
and a lifting to $B$ of the action of $G$ on $A$ that commutes with the
action of $\Gamma$ on $B$. The quotients $\widetilde X$ and $X$ get replaced by
$\text{Spec}(A^G)$ and $\text{Spec}(B^G)$ respectively. Since $M$ is smooth,
hence normal, and the map $\varphi$ restricted to any closed orbit of $G$ is
injective, the following lemma completes the proof of the proposition.
\end{proof}

\begin{lemma}\label{affine}
Suppose the $G$--equivariant finite \'etale
map $f\,:\, {\widetilde M} \,\longrightarrow \,M$ of affine varieties is
defined by an inclusion $A\,\subset\, B$ of finite type $k$ algebras
such that
\begin{itemize}
\item $A$ is normal, and

\item $f$ restricted to each closed
orbit of $G$ is an injection.
\end{itemize}
Then the induced map on the quotients ${\widetilde X}\,\longrightarrow\, X$ is also
finite \'etale, and ${\widetilde M}$ is the fiber product ${\widetilde X}\times_X M$.
\end{lemma}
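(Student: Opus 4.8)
The plan is to exploit the finite group $\Gamma\,=\,\mathrm{Gal}(B/A)$, which acts on $B$ commuting with $G$. Since $f$ is Galois we have $A\,=\,B^\Gamma$, and as the actions of $G$ and $\Gamma$ commute this gives $A^G\,=\,(B^G)^\Gamma$. Hence $X\,=\,\mathrm{Spec}(A^G)$ is precisely the quotient $\widetilde X/\Gamma$ of $\widetilde X\,=\,\mathrm{Spec}(B^G)$ by the finite group $\Gamma$, and the map $\bar f\,:\,\widetilde X\,\longrightarrow\, X$ in question is this quotient map. Because $G$ is reductive, $B^G$ is a finitely generated $k$--algebra, and it is integral over its subring of $\Gamma$--invariants $A^G$; thus $\bar f$ is automatically finite. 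The whole problem therefore reduces to showing that $\Gamma$ acts \emph{freely} on $\widetilde X$: once freeness is known, the standard theory of quotients by free actions of finite (constant) group schemes shows that $\bar f$ is finite \'etale Galois with group $\Gamma$.

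The heart of the matter is this freeness, and I expect it to be the main obstacle. Here I would use the good quotient property: since $G$ is reductive, the closed points of $\widetilde X\,=\,\mathrm{Spec}(B^G)$ are in bijection with the closed $G$--orbits in $\widetilde M$, the $\Gamma$--action on $\widetilde X$ being the one induced by the permutation of closed orbits. Suppose some $\gamma\,\in\,\Gamma$ fixes a closed point of $\widetilde X$; then $\gamma$ preserves the corresponding closed orbit $O\,\subset\,\widetilde M$, that is, $\gamma(O)\,=\,O$. Pick any $z\,\in\, O$. Since $\gamma$ is a deck transformation of $f$ we have $f(\gamma(z))\,=\,f(z)$, while $\gamma(z)$ and $z$ both lie in $O$; the hypothesis that $f$ restricted to the closed orbit $O$ is injective then forces $\gamma(z)\,=\,z$. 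Thus $\gamma$ has a fixed point on the connected variety $\widetilde M$, and a deck transformation of a connected \'etale Galois covering with a fixed point must be the identity. Hence $\gamma\,=\,e$, proving that $\Gamma$ acts freely on the closed points, and therefore freely, on $\widetilde X$. Normality of $A$ enters to guarantee that $X\,=\,\mathrm{Spec}(A^G)$ is itself a normal variety, and is what one would invoke in an alternative argument via purity of the branch locus; in the present approach the orbitwise injectivity is the essential input.

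It remains to identify $\widetilde M$ with the fibre product. Consider the natural $G$--morphism $\Psi\,:\,\widetilde M\,\longrightarrow\,\widetilde X\times_X M$ determined by the quotient map $\widetilde M\,\to\,\widetilde X$ and by $f$. Both $\widetilde M$ and $\widetilde X\times_X M$ are finite \'etale over $M$ of degree $|\Gamma|$ --- the first because $f$ is finite \'etale Galois, the second as the pullback of the finite \'etale cover $\bar f$ along $M\,\to\,X$ --- so $\Psi$, being a morphism of finite \'etale $M$--schemes, is itself finite \'etale. Moreover $\Psi$ is $\Gamma$--equivariant, where $\Gamma$ acts on $\widetilde X\times_X M$ through its action on $\widetilde X$. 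Over each geometric point $m$ of $M$ the fibre of $\widetilde M$ is a $\Gamma$--torsor, as $f$ is Galois, and, since $\bar f$ is Galois with group $\Gamma$ by the previous step, the fibre of $\widetilde X\times_X M$ is again a $\Gamma$--torsor; a $\Gamma$--equivariant map of $\Gamma$--torsors is necessarily bijective. Thus $\Psi$ is a bijective finite \'etale morphism, hence an isomorphism, and $\widetilde M\,=\,\widetilde X\times_X M$ as desired.
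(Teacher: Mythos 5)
Your proof is correct and follows essentially the same route as the paper: both reduce the problem to showing that $\Gamma$ acts freely on $\widetilde{X}\,=\,\mathrm{Spec}(B^G)$ via the identity $A^G\,=\,(B^G)^{\Gamma}$, deduce freeness from the unique closed orbit in each fibre together with the orbitwise injectivity hypothesis, and then identify $\widetilde{M}$ with $\widetilde{X}\times_X M$ by comparing two finite \'etale covers of $M$ of the same degree $|\Gamma|$. The only cosmetic difference is that the paper records explicitly that $B^G$ is the integral closure of $A^G$ in $L$ before invoking the quotient-by-free-action criterion, and phrases the final degree count algebraically through the factorization $A\,\subset\, A\otimes_{A^G}B^G\,\subset\, B$ rather than through torsors over geometric points.
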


\begin{proof}
This can be found in \cite{Dr} (see \cite[Proposition 4.16]{Dr} and
\cite[Proposition 4.18]{Dr}). In \cite{Dr} it is assumed that the characteristic of
the field $k$ is zero. However, the proof can be checked to be characteristic free. 
For the convenience of the reader, we give a brief outline of the proof.

The actions of $G$ on $A$ and $B$ 
extend to the quotient fields $K$ and $L$ respectively. The spaces of invariants
for the action of the Galois group $\Gamma$ on $B$ and $L$ are $A$ and $K$
respectively.

Since the actions of $G$ and $\Gamma$ on $B$ commute, the natural inclusion $A^G
\,\subset\, (B^G)^{\Gamma}$ is an isomorphism. This implies that $B^G$ is
finite over $A^G$. 

We first observe that $B^G$ is the integral closure of $A^G$ in
$L$. Indeed, if an element $a\,\in\, L$ is integral over $A^G$, then
all $G$ translates of $a$ are also solutions of the same equation. Therefore,
the connectedness of $G$ implies that $a$ is fixed by $G$.

Consequently, ${\rm Spec}(B^G)$ is a normal affine variety equipped with an action
of $\Gamma$ such that the quotient is ${\rm Spec}(A^G)$. So
to show that the map ${\widetilde X}\,\longrightarrow\, X$ in the lemma is \'etale it
is enough to check that this action of $\Gamma$ is free on the closed points. 

Let $x\,\in\, {\rm Spec}(B^G)$ be a closed point such that there is an element
$\gamma \,\in \,\Gamma$ with $\gamma\cdot x \,= \,x$. Let $y$ be a
closed point in the unique closed orbit in the fiber of the map
${\widetilde M} \,\longrightarrow\,{\widetilde X}$ over $x$. Since $\gamma$
commutes with $G$ (see Lemma \ref{lem1}) we get that $\theta'(\gamma\cdot y
\, ,G)\,=\, \gamma\cdot \theta'(y\, ,G)$ is also the unique closed orbit projecting
to $x$. Hence, we have $\theta'(\gamma\cdot y
\, ,G)\,=\, \theta'(y\, ,G)$. Now from the injectivity of the map from $\Gamma$
to the permutations of the components of $\theta'(y\, ,G)$ we conclude 
that $\gamma\cdot y \,=\,y$. Consequently, we have $\gamma \,=\,e$.
This proves that the morphism ${\widetilde X} \,\longrightarrow\, X$ is \'etale.

For the isomorphism ${\widetilde M}\,=\,{\widetilde X}\times_X M$, first note
that the $G$--equivariant inclusion $A \,\hookrightarrow\, B$ factors
via
$$
A \,\subset\, A \otimes _{A^G}B^G\, .
$$
Therefore, in order to prove that ${\widetilde M}\,=\,{\widetilde X}\times_X M$ it is 
enough to prove it under the assumption that the natural $G$--equivariant homomorphism
$$
A \otimes_{A^G}B^G \,\longrightarrow \,B
$$
is an isomorphism.

By using the conclusion of the first part of the lemma that $B^G$ is finite
and \'etale over $A^G$ of cardinality $|\Gamma|$, the base change $A
\,\hookrightarrow\, A\otimes_{A^G}B^G$ is also finite and \'etale of
the same cardinality. Since we started with a finite and \'etale algebra
$B$ over $A$ we conclude that $ A\otimes_{A^G}B^G \,\hookrightarrow \,B $
is also finite and \'etale. 

Now the isomorphism ${\widetilde M}\,=\,
{\widetilde X}\times_X M$ follows because both $\text{Spec}\,B$ and
$\text{Spec}(A\otimes_{A^G}B^G)$ are finite and \'etale over ${\rm Spec}\, A$
of same fiber cardinality $|\Gamma|$.
\end{proof} 

The construction in Proposition \ref{prop2} of a covering ${\widetilde X}$
starting from an \'etale covering $M'$ of $M$ is functorial (compatible with
the standard operations on coverings), and defines a homomorphism
\begin{equation}\label{e6}
H\, :\, \pi_1(X,\, x_0)\, \longrightarrow\, \pi_1(M,\, u_0)\, .
\end{equation}

Given an \'etale Galois covering $\phi\, :\, Y\, \longrightarrow\, X$, the
\'etale Galois covering of $M$ corresponding to the homomorphism $h$ in \eqref{f1}
is constructed as follows. Consider the pullback of the covering $\phi$ to the
open subset of $M$ where the rational map $M\,\dasharrow\, X$ is defined. This
covering extends to $M$ because the complement of the open subset has codimension
at least two and $M$ is smooth.

The above two constructions, namely the construction of a covering of $X$ from
a covering of $M$, and the construction of a covering of $M$ from
a covering of $X$, are clearly inverses of each other. Therefore,
for the two homomorphisms $h$ and $H$ in \eqref{f1} and \eqref{e6},
we have $h\circ H\, =\, \text{Id}_{\pi_1(X, x_0)}$ and
$H\circ h\, =\, \text{Id}_{\pi_1(M, u_0)}$. Consequently, the following is proved:

\begin{theorem}\label{thm1}
The homomorphism $h$ in \eqref{f1} is an isomorphism.
\end{theorem}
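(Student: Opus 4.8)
The plan is to prove that $h$ is an isomorphism by showing that the homomorphism $H$ of \eqref{e6} is a two-sided inverse, which in turn reduces to the assertion that the two functorial constructions defining $h$ and $H$ are mutually inverse on finite \'etale coverings. Concretely, let $F_H$ denote the operation of Proposition \ref{prop2} that sends a connected finite \'etale Galois covering $M'\to M$ to the covering $f\colon \widetilde X\to X$, and let $F_h$ denote the operation that sends a finite \'etale Galois covering $\phi\colon Y\to X$ to the covering of $M$ obtained by pulling $\phi$ back along the GIT quotient rational map over the open set $U\subset M$ on which it is defined and then extending across $M\setminus U$. The extension exists and is unique because $M$ is smooth and $M\setminus U$ has codimension at least two, so purity applies; this is precisely the construction that realizes $h$ of \eqref{f1} through the isomorphism $\pi_1(U,u_0)\cong\pi_1(M,u_0)$. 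Since every finite \'etale covering is dominated by a Galois one, it suffices to verify $F_h\circ F_H\cong\mathrm{Id}$ and $F_H\circ F_h\cong\mathrm{Id}$ on Galois coverings, and this yields $H\circ h=\mathrm{Id}$ and $h\circ H=\mathrm{Id}$.

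The decisive input is the \emph{moreover} clause of Proposition \ref{prop2}: the restriction $\varphi|_{\widetilde M}\colon \widetilde M\to M^{ss}$ equals the pullback $q^{*}f$ of $f\colon\widetilde X\to X$ along the quotient morphism $q\colon M^{ss}\to X$. I would exploit this over the dense open subset $M^{ss}\subseteq U$, where the rational map coincides with $q$. For $F_h\circ F_H$: starting from $M'\to M$, form $\widetilde X$ by Proposition \ref{prop2}, pull it back over $U$, and extend to $M$. By the moreover clause, the restriction of this pullback to $M^{ss}$ is canonically $q^{*}\widetilde X=\widetilde M$, which is exactly the restriction of $M'$ to $M^{ss}$. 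Because $M^{ss}$ is a nonempty, hence dense, open subset of the smooth irreducible variety $U$, the restriction functor on finite \'etale coverings of $U$ is fully faithful, so the canonical isomorphism over $M^{ss}$ extends uniquely over $U$, and then over $M$ by purity. Hence $F_h(F_H(M'))\cong M'$.

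For the reverse composite $F_H\circ F_h$: starting from $\phi\colon Y\to X$, build the Galois covering $M'=F_h(Y)$ and apply Proposition \ref{prop2} to obtain $\widetilde X=F_H(M')$, together with the identification $\widetilde M=q^{*}\widetilde X$. On the other hand, by the construction of $F_h$ the restriction of $M'$ to $M^{ss}$ is $q^{*}Y$, so $\widetilde M=q^{*}Y$ as well. Thus $q^{*}\widetilde X$ and $q^{*}Y$ are canonically isomorphic coverings of $M^{ss}$. Since both $\widetilde X$ and $Y$ are recovered from their pullback to $M^{ss}$ by the same descent along $q$ (namely the quotient procedure of Proposition \ref{prop2}, under which $q^{*}Y=Y\times_X M^{ss}$ descends to $Y$), this isomorphism descends to a canonical isomorphism $\widetilde X\cong Y$ over $X$, giving $F_H(F_h(Y))\cong Y$.

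The step I expect to be the main obstacle is the bookkeeping of naturality, ensuring that these comparisons are genuine natural isomorphisms of functors rather than mere bijections on isomorphism classes: one must check that the identification $q^{*}\widetilde X\cong\widetilde M$ respects the commuting $G$-- and $\Gamma$--actions (Lemma \ref{lem1}) and is compatible with morphisms of coverings, and that descent along $q$ is well defined and inverse to $q^{*}$. Here the hypotheses that $M$, and hence $U$ and $M^{ss}$, is normal and that $M\setminus U$ has codimension at least two are used in an essential way, both to extend coverings across the indeterminacy locus and to guarantee that restriction to the dense open $M^{ss}$ loses no information.
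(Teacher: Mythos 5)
Your proposal follows essentially the same route as the paper: the paper defines $H$ via the functorial construction of Proposition \ref{prop2}, realizes $h$ by pulling back along the quotient map over $U$ and extending across the codimension-two locus, and then asserts that the two constructions are inverse to each other. You supply correctly the detail the paper leaves implicit (using the \emph{moreover} clause of Proposition \ref{prop2} together with full faithfulness of restriction to the dense open $M^{ss}$), so this is the same argument, just more fully justified.
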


\begin{remark}
Let $M\,=\, {\mathbb P}^1_k/\{0\,=\, \infty\}$ be the unique nodal curve of
arithmetic genus one. The action of ${\mathbb G}_m$ on ${\mathbb P}^1_k$ defined by
$t\cdot (x_1\, ,x_2)\,=\, (tx_1\, ,x_2/t)$ produces an action of ${\mathbb G}_m$
on $M$. While $M$ is not simply connected, the GIT quotient $M/\!\!/{\mathbb G}_m
\,=\,{\rm Spec}\,k$ is so. So the condition in Theorem \ref{thm1} that $M$
is smooth is essential.
\end{remark}

\section{The topological fundamental group}

In this section we assume that $k\,=\, \mathbb C$. The topological
fundamental group of any $Y$ with base point $y_0\,\in\, Y$ will be denoted
by $\pi^t_1(Y, \, y_0)$. We recall that an irreducible smooth complex projective
variety is also called a complex projective manifold.

As before, fix a $G$--equivariant algebraic line bundle on the complex
projective manifold $M$.

Let
$$
\varphi\, :\, M'\, \longrightarrow\, M
$$
be a holomorphic \'etale Galois covering of $M$ such that $M'$ is connected. It
should be clarified that the degree of $\varphi$ is now allowed to be infinite.

\begin{proposition}\label{prop3}
\mbox{}
\begin{enumerate}
\item There is a unique holomorphic action of $G$ on $M'$
$$
\theta'\, :\, M'\times G\, \longrightarrow\, M'
$$
that lifts the action $\theta$ of $G$ on $M$.

\item The actions of $G$ and $\Gamma\,=\, {\rm Gal}(\varphi)$ on $M'$ commute.
\end{enumerate}
\end{proposition}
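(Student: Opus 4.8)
The plan is to transport the proofs of Proposition \ref{prop1} and Lemma \ref{lem1} into the holomorphic category, replacing the étale descent arguments by the classical lifting criterion for topological covering spaces. The essential new feature is that, since $\deg\varphi$ may be infinite, the covering $\varphi$ is recorded by the subgroup $H\,:=\,\varphi_*(\pi^t_1(M',\,x'))\,\subset\,\pi^t_1(M,\,x)$ for a chosen $x'\,\in\,\varphi^{-1}(x)$, and the existence of lifts must be read off from $H$ rather than from a trivialization of a finite covering. Throughout, $M'$, $M$ and $G$ are connected complex manifolds, hence locally path connected and semilocally simply connected, so covering space theory applies; and because $\varphi$ is a local biholomorphism, any continuous lift through $\varphi$ of a holomorphic map is automatically holomorphic.

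The key step is the topological analogue of \eqref{e2}: for every $x\,\in\, M$ the orbit map $\varphi^x\,:\,G\,\longrightarrow\,M$, $g\,\longmapsto\,\theta(x,g)$, induces the zero homomorphism $\varphi^x_*\,:\,\pi^t_1(G,\,e)\,\longrightarrow\,\pi^t_1(M,\,x)$. This cannot be quoted from \eqref{e2}, which concerns only the algebraic fundamental group, and must be reproved topologically. I would first treat a point $x$ whose orbit has minimal dimension; as in Proposition \ref{prop1} this orbit is complete, and since $k\,=\,\mathbb C$ has characteristic zero the isotropy group scheme $G_x$ is reduced and equals a parabolic subgroup $P$, so the orbit is the flag variety $G/P$. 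A flag variety is simply connected as a complex manifold, its Bruhat decomposition having only even-dimensional cells, so $\pi^t_1(\theta(x,G))\,=\,0$ and $\varphi^x_*\,=\,0$ for this $x$. To pass to an arbitrary point $y$, I would use that for topological fundamental groups $\pi^t_1(M\times G)\,=\,\pi^t_1(M)\times\pi^t_1(G)$ holds for all path connected spaces, with no properness hypothesis required; writing $\varphi^x\,=\,\theta\circ\psi^x$ and $\varphi^y\,=\,\theta\circ\psi^y$ exactly as in Proposition \ref{prop1}, the images $\psi^x_*(\pi^t_1(G))$ and $\psi^y_*(\pi^t_1(G))$ are conjugate in $\pi^t_1(M\times G)$ through a path joining $x$ to $y$ in the connected manifold $M$, and conjugation preserves the trivial image, whence $\varphi^y_*\,=\,0$.

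With this in hand I would construct the action by lifting. Consider $F\,:=\,\theta\circ(\varphi\times{\rm Id}_G)\,:\,M'\times G\,\longrightarrow\,M$; by the lifting criterion a lift $\theta'\,:\,M'\times G\,\longrightarrow\,M'$ through $\varphi$ exists, and is unique once a base point is fixed, provided $F_*(\pi^t_1(M'\times G,\,(x',e)))\,\subset\, H$. Under the identification $\pi^t_1(M'\times G)\,=\,\pi^t_1(M')\times\pi^t_1(G)$, the restriction of $F$ to $M'\times\{e\}$ is $\varphi$ (because $\theta(-,e)\,=\,{\rm Id}_M$), so the first factor maps into $H$, while the restriction to $\{x'\}\times G$ is $\varphi^x$ with $x\,=\,\varphi(x')$, whose image on $\pi^t_1$ vanishes by the previous step; hence $F_*(\pi^t_1(M'\times G))\,=\,H$ and $\theta'$ exists, the normalization $\theta'(x',e)\,=\,x'$ making it unique, and it is holomorphic. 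That $\theta'$ is a group action then follows exactly as in Proposition \ref{prop1}: $\theta'(-,e)$ lifts ${\rm Id}_M$ and fixes $x'$, hence equals ${\rm Id}_{M'}$, and the two maps $M'\times G\times G\,\longrightarrow\,M'$ expressing associativity both lift the same map to $M$ and agree at $(x',e,e)$, so they coincide by uniqueness of lifts on the connected space $M'\times G\times G$. Uniqueness of the action itself repeats Proposition \ref{prop1}: two lifts differ by a deck transformation $\gamma$, and evaluating $\theta'(z,e)\,=\,z$ forces $\gamma\,=\,{\rm Id}$. Part (2) is obtained verbatim from Lemma \ref{lem1}: for $\gamma\,\in\,\Gamma$ the map $(z,g)\,\longmapsto\,\gamma(\theta'(\gamma^{-1}(z),g))$ is again a lift of $\theta$, so by uniqueness it equals $\theta'$, which says precisely that the $\Gamma$-- and $G$--actions commute.

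The main obstacle is the topological vanishing $\varphi^x_*\,=\,0$, since the earlier identity \eqref{e2} holds only for the algebraic fundamental group and does not transfer formally; it has to be reestablished through the topological simple connectivity of the flag variety $G/P$. By contrast, the potentially infinite degree of $\varphi$ introduces no extra difficulty once one argues via the lifting criterion, and the product formula for $\pi^t_1$ is in fact cleaner than its étale counterpart, requiring no projectivity of $M$.
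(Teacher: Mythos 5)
Your proof is correct and follows the route the paper intends: the paper's own proof of this proposition is just the assertion that the arguments of Proposition \ref{prop1} and Lemma \ref{lem1} go through verbatim, and what you have written is a faithful topological transcription of those arguments, correctly isolating the two points that genuinely need attention --- the vanishing of $\varphi^x_*$ on \emph{topological} fundamental groups (which you get from the simple connectivity of the flag variety $G/P$, using that $G_x$ is automatically reduced in characteristic zero), and the replacement of the trivialization of a finite pulled-back covering by the subgroup lifting criterion so as to allow $\deg\varphi$ to be infinite. For what it is worth, the paper later obtains the topological triviality of orbit fundamental groups by a slightly different route (Lemma \ref{lem2} and Corollary \ref{cor1}, via a smooth unirational compactification of the orbit and Serre's theorem), which avoids your minimal-orbit reduction and basepoint-conjugation step; either argument suffices here.
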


\begin{proof}
The proof of the first (respectively, second) part of the proposition is exactly
identical to the proof of Proposition \ref{prop1} (respectively, Lemma \ref{lem1}).
\end{proof}

The geometric invariant theory is not applicable for the action of $G$ on $M'$
because $M'$ is not an algebraic variety in general. However we will
construct from $M'$ a covering of the GIT quotient $X\,=\, M/\!\!/ G$.

Let $M^{ss}\, \subset\, M$ be the semistable locus for the
action of $G$ on $M$; it is a Zariski open subset
preserved by the action of $G$. Let
\begin{equation}\label{beta}
\beta\, :\, M^{ss}\,\longrightarrow\, X
\end{equation}
be the quotient map. This map $\beta$ is surjective.

Take an affine open subset ${\mathcal U}\, \subset\, X$. The inverse image
$$
\beta^{-1}({\mathcal U})\, \subset\, M^{ss}
$$
is also an affine open subset. Fix a maximal compact subgroup
$$
K_G\, \subset\, G\, .
$$
There is a $K_G$--invariant subset
$$
U_K\, \subset\, \beta^{-1}({\mathcal U})
$$
such that
\begin{itemize}
\item $\beta^{-1}({\mathcal U})$ admits a deformation retraction to $U_K$,

\item the map $\beta\vert_{U_K}\, :\, U_K\, \longrightarrow\, \mathcal U$ is
surjective,

\item the quotient map $U_K/K_G\, \longrightarrow\, \mathcal U$ is a homeomorphism, and

\item the subset $U_K$ is contained in the union of all closed $G$--orbits 
satisfying the condition that the intersection with $U_K$ is a $K$--orbit.
\end{itemize}
(See \cite[p. 422, Corollary 1.4]{Ne} and \cite[p. 424, Theorem 2.1]{Ne}; also stated
in the first paragraph of the introduction in \cite[p. 419]{Ne}. See also \cite{KN}.)

Let
\begin{equation}\label{s2}
\widetilde{U}_K\,:=\, \varphi^{-1}(U_K)\, \subset\, M'
\end{equation}
be the inverse image. As $\beta^{-1}({\mathcal U})$ is a nonempty Zariski open subset
of $M$, and $M$ is smooth, the homomorphism of topological fundamental groups
\begin{equation}\label{s1}
\pi^t_1(\beta^{-1}({\mathcal U}))\, \longrightarrow\, \pi^t_1(M)\, ,
\end{equation}
induced by the inclusion of $\beta^{-1}({\mathcal U})$ in $M$, is surjective.
Since $U_K$ is a deformation
retraction of $\beta^{-1}({\mathcal U})$, from the surjectivity of the homomorphism
in \eqref{s1} it follows immediately that the homomorphism
\begin{equation}\label{s-1}
\pi^t_1(U_K)\, \longrightarrow\, \pi^t_1(M)
\end{equation}
induced by the inclusion of $U_K$ in $M$ is surjective. Note that surjectivity of the
homomorphism induced on fundamental groups induced by a map is equivalent to the 
condition that the pullback 
to the domain (of the map) of any connected \'etale covering of the target space 
(of the map) remains connected. Consequently, from the surjectivity of
the homomorphism in \eqref{s-1} it follows that the inverse
image $\widetilde{U}_K$ in \eqref{s2} is connected (recall that $M'$ is connected).

\begin{lemma}\label{lem2}
Take a point $x\, \in\, M$. Let $Z\, :=\, \theta(x\, ,K_G)\, \subset\, M$ be the
$K_G$--orbit of $x$. Then $\varphi^{-1}(Z)$ is a disjoint union of copies of $Z$.
More precisely, the restriction of $\varphi$ to each connected component $S$ of
$\varphi^{-1}(Z)$ is a homeomorphism from $S$ to $Z$.
\end{lemma}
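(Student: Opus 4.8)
The plan is to produce a continuous section of $\varphi$ over $Z$; once such a section exists, the conclusion follows formally. Fix a point $x'\in\varphi^{-1}(x)$ and consider the lifted action $\theta'$ furnished by Proposition \ref{prop3}. The orbit map $K_G\to Z$, $k\mapsto\theta(x,k)$, lifts to the continuous map $\sigma\colon K_G\to M'$, $k\mapsto\theta'(x',k)$, because $\varphi(\sigma(k))=\theta(\varphi(x'),k)=\theta(x,k)$. Writing $H:=K_G\cap G_x$ for the $K_G$--stabilizer of $x$, the map $\sigma$ descends to a section $s\colon Z\to M'$ precisely when $\theta'(x',h)=x'$ for every $h\in H$. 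Granting this, $s(Z)$ is open in $\varphi^{-1}(Z)$ (as $\varphi$ is a local homeomorphism) and maps homeomorphically to $Z$; since $\Gamma$ acts freely and transitively on each fibre and commutes with $\theta'$ (Lemma \ref{lem1} and Proposition \ref{prop3}), the translates $\{\gamma\cdot s(Z)\}_{\gamma\in\Gamma}$ are pairwise disjoint, cover $\varphi^{-1}(Z)$, and each maps homeomorphically onto $Z$. This is exactly the assertion of the lemma.

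Thus everything reduces to showing that $H$ fixes $x'$. Because $K_G\to Z=K_G/H$ is a fibre bundle with fibre $H$, the homotopy exact sequence provides a connecting map $\partial\colon\pi^t_1(Z)\to\pi_0(H)$ whose kernel is the image of $\pi^t_1(K_G)$. A loop coming from $\pi^t_1(K_G)$ lifts through $\sigma$ to a loop in $M'$, so the monodromy homomorphism $\pi^t_1(Z)\to\Gamma$ of the covering $\varphi^{-1}(Z)\to Z$ is trivial on $\ker\partial$ and therefore factors through $\pi_0(H)$. Using that $\Gamma$ commutes with $\theta'$, one checks that the resulting map is a homomorphism $\pi_0(H)\to\Gamma$; it is the restriction of the analogous homomorphism $\pi_0(G_x)\to\Gamma$ sending the class of $g\in G_x$ to the deck transformation carrying $x'$ to $\theta'(x',g)$. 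Hence it suffices to prove that this last homomorphism is trivial, i.e. that the entire stabilizer $G_x$ fixes $x'$, which is the same as saying that $\varphi$ restricts to a trivial covering over the whole $G$--orbit $\theta(x,G)$.

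To prove this I would argue as in Proposition \ref{prop1}. The identity component $G_x^{\circ}$ acts trivially on the discrete fibre $\varphi^{-1}(x)$, so only the finite group $\pi_0(G_x)$ is at issue, and the corresponding classes are represented by loops lying inside the orbit $\theta(x,G)$. I would then pass to the orbit closure $\overline{\theta(x,G)}\subset M$ and to a closed orbit contained in it, which in characteristic zero is of the form $G/P$ for a parabolic subgroup $P$ and is therefore simply connected, exactly as in the proof of Proposition \ref{prop1}. Over such a closed orbit the covering is trivial and its $G$--stabilizer, being the connected group $P$, fixes every lift; I would then transport this triviality back to $x$ along a one--parameter subgroup $\lambda$ with $\lim_{t\to0}\lambda(t)\cdot x$ in the closed orbit: lifting the resulting map $\mathbb{A}^1\to M$ to $M'$ and comparing it with $t\mapsto\theta'(-,\lambda(t)g\lambda(t)^{-1})$, the continuity of $\theta'$ into the discrete fibre forces the monodromy at $x$ to equal the (trivial) monodromy at the limit point.

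The main obstacle is precisely this component group $\pi_0(G_x)$. The easy lifting of the orbit map only controls the image of $\pi^t_1(G)$, and the vanishing \eqref{e2} only controls monodromy into finite quotients, whereas here $\varphi$ is allowed to be an infinite covering, so one cannot simply invoke residual finiteness of $\Gamma$. What rescues the argument is that the generating loops bound in the complete orbit closure, making them trivial in $\pi^t_1(M)$ itself and hence invisible to every covering. The delicate point I expect to fight with is arranging the degenerating one--parameter subgroup so that every connected component of $G_x$ is carried into the associated parabolic $P(\lambda)$ (equivalently, so that the conjugates $\lambda(t)g\lambda(t)^{-1}$ converge as $t\to0$ for a family of $g$ exhausting $\pi_0(G_x)$), which is what legitimizes the transport of monodromy.
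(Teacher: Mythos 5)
Your formal reductions are sound: the section-plus-translates argument in the first paragraph, and the identification of the monodromy of $\varphi^{-1}(Z)\to Z$ with the map $\pi_0(K_G\cap G_x)\to\Gamma$ sending $h$ to the deck transformation carrying $x'$ to $\theta'(x',h)$, correctly reduce the lemma to showing that every $g\in G_x$ fixes $x'$, i.e.\ that $\varphi$ trivializes over the whole orbit $\theta(x,G)$. The gap is exactly where you flag it, and it is a genuine one rather than a routine verification. Your proof of this key fact rests on producing a one-parameter subgroup $\lambda$ such that (a) $\lim_{t\to 0}\theta(x,\lambda(t))$ lies in a \emph{closed} orbit, and (b) representatives of every connected component of $G_x$ lie in the parabolic $P(\lambda)$, so that their $\lambda$-conjugates converge as $t\to 0$. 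Neither is established. Point (a) is a Hilbert--Mumford/Birkes-type statement normally proved for affine $G$-varieties or linear representations; a single one-parameter degeneration of a point of a projective $M$ need not land in a closed orbit of the orbit closure. Point (b) has the strength of Kempf's theorem that the stabilizer of an unstable point is contained in the parabolic of the optimal destabilizing one-parameter subgroup; but the lemma concerns an arbitrary $x\in M$ (not an unstable point of a linear representation), so there is no optimal $\lambda$ to invoke, and nothing in your sketch produces one. Without (b) the limit of $\theta'\bigl(x'_t,\lambda(t)^{-1}g\lambda(t)\bigr)$ cannot be evaluated as $\theta'(x'_0,g_0)$, and the transport of the trivial monodromy from the closed orbit back to $x$ breaks down.

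The paper sidesteps all of this with a short argument worth comparing with yours. Since $M$ is projective and the orbit $G/G_x=\theta(x,G)$ is smooth and open in its closure, there is a smooth projective variety $\widetilde Z$ containing $G/G_x$ as a Zariski open subset together with a morphism $\tau\colon\widetilde Z\to M$ extending the inclusion (resolve the orbit closure). As $G$ is a rational variety, $\widetilde Z$ is unirational, hence simply connected by Serre's theorem \cite{Se}. Since the inclusion $Z=K_G/(K_G\cap G_x)\hookrightarrow M$ factors through $\widetilde Z$, the induced map $\iota^x_*\colon\pi^t_1(Z,x)\to\pi^t_1(M,x)$ vanishes, so the covering trivializes over $Z$ for every $\varphi$ at once --- no degeneration, no analysis of $\pi_0(G_x)$, and no case distinction between closed and non-closed orbits. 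To salvage your strategy you would essentially have to reprove this compactification--unirationality input by hand, at which point the direct route is both shorter and stronger (it proves Corollary \ref{cor1} outright).
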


\begin{proof}
Let $\iota^x\, :\, Z\, \hookrightarrow\, M$ be the inclusion map. To prove the
lemma it suffices to show that the homomorphism
$$
\iota^x_*\, :\, \pi^t_1(Z,\, x)\, \longrightarrow\, \pi^t_1(M,\, x)\, ,
$$
induced by the inclusion $\iota^x$, is trivial.

Let $G_x\, \subset\, G$ be the isotropy subgroup for $x$. The $G$--orbit $\theta
(x\, ,G)$ is identified with the quotient $G/G_x$. Since $M$ is projective, there is
an irreducible smooth complex projective variety $\widetilde{Z}$ containing 
$G/G_x$ as a Zariski open subset such that the inclusion map
$$
G/G_x\,=\, \theta(x\, ,G)\,\hookrightarrow\, M
$$
extends to a morphism
\begin{equation}\label{tau}
\tau\, :\,\widetilde{Z}\, \longrightarrow\, M\, .
\end{equation}

Since $G$ is a rational variety, the quotient
$G/G_x$ is unirational. So $\widetilde{Z}$ is unirational. Since $\widetilde{Z}$
is smooth, this implies that $\widetilde{Z}$ is simply
connected \cite[p. 483, Proposition 1]{Se}.

The above inclusion map $\iota^x$ coincides with the composition
$$
Z\,=\, K_G/(K_G\cap G_x) \, \hookrightarrow\, G/G_x \, \hookrightarrow\, \widetilde{Z}
\, \stackrel{\tau}{\longrightarrow}\, M\, ,
$$
where $\tau$ is the map in \eqref{tau}. Since $\widetilde{Z}$ is simply 
connected, this implies that $\iota^x_*\,=\, 0$.
As noted before, the lemma follows from this.
\end{proof}

\begin{corollary}\label{cor1}
The image in $\pi^t_1(M)$ of the fundamental group of any $G$ orbit in $M$ is trivial.
\end{corollary}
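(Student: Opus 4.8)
The plan is to observe that the corollary is an immediate consequence of the construction carried out in the proof of Lemma \ref{lem2}, now read off directly for the full $G$--orbit rather than the $K_G$--orbit. Fix a point $x\, \in\, M$ and consider its $G$--orbit $\theta(x\, ,G)\,=\, G/G_x$, where $G_x\, \subset\, G$ is the isotropy subgroup. I would reuse verbatim the smooth irreducible complex projective variety $\widetilde{Z}$ produced in that proof: it contains $G/G_x$ as a Zariski open subset, and the inclusion $\theta(x\, ,G)\, \hookrightarrow\, M$ extends to the morphism $\tau\, :\, \widetilde{Z}\, \longrightarrow\, M$ of \eqref{tau}. The essential property established there, via the rationality of $G$ and hence the unirationality of $G/G_x$, is that $\widetilde{Z}$ is simply connected.

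The key observation is simply that the inclusion of the $G$--orbit into $M$ factors as the composition
$$
\theta(x\, ,G)\,=\, G/G_x\, \hookrightarrow\, \widetilde{Z}\, \stackrel{\tau}{\longrightarrow}\, M\, .
$$
Since $\pi^t_1(\widetilde{Z})\,=\, 0$, the induced homomorphism on topological fundamental groups factors through the trivial group, so the image in $\pi^t_1(M)$ of $\pi^t_1(\theta(x\, ,G))$ is trivial. As $x$ was an arbitrary point, this is exactly the assertion of the corollary.

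I do not expect any genuine obstacle: all the substantive work has already been done inside Lemma \ref{lem2}, where both the compactification $\widetilde{Z}$ and its simple connectedness were produced. The only point one must notice is that the factorization through $\widetilde{Z}$ is of the $G$--orbit inclusion itself, not merely of the $K_G$--orbit inclusion; but this is immediate, since $\tau$ is by construction an extension of the inclusion of $\theta(x\, ,G)$, and $G/G_x$ sits in $\widetilde{Z}$ as the very open set that was compactified. It is worth emphasizing, in contrast to the algebraic counterpart \eqref{e2}, that here no separate passage from a minimal-dimensional orbit to an arbitrary one is required: a simply connected compactification $\widetilde{Z}$ is available for every orbit at once, so the triviality is uniform in $x$.
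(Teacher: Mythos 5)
Your proof is correct and is essentially identical to the paper's own argument: the authors likewise observe that the inclusion of the orbit $\theta(x\, ,G)$ in $M$ factors through the simply connected compactification $\widetilde{Z}$ constructed in the proof of Lemma \ref{lem2}. Nothing further is needed.
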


\begin{proof}
We saw in the proof of Lemma \ref{lem2} that the inclusion of the orbit
$\theta(x\, ,G)$ in $M$ factors through the simply connected variety $\widetilde{Z}$.
\end{proof}

Define
\begin{equation}\label{e7}
\widetilde{U}\, :=\, \widetilde{U}_K/K_G\, ,
\end{equation}
where $\widetilde{U}_K$ is defined in \eqref{s2}.
We note that $\widetilde{U}$ is connected because $\widetilde{U}_K$ is connected.
The natural map 
$$
(\beta\circ \varphi)\vert_{\widetilde{U}_K}\, :\, \widetilde{U}_K\,
\longrightarrow\, \mathcal U\, ,
$$
is clearly $K_G$--invariant. So it descends to a map
\begin{equation}\label{a3}
\widetilde{\varphi}_U\, :\, \widetilde{U}\, \longrightarrow\, \mathcal U\, ,
\end{equation}
where $\widetilde{U}$ is defined in \eqref{e7}.

Since the actions of $G$ and $\Gamma$ on $M'$ commute (Proposition
\ref{prop3}(2)), the action of $\Gamma$ on $\widetilde{U}_K$ descends to
an action of $\Gamma$ on the quotient $\widetilde{U}$ in \eqref{e7}. Note that the
quotient map
\begin{equation}\label{a2}
\widetilde{U}\, \longrightarrow\, \widetilde{U}/\Gamma
\end{equation}
is an \'etale Galois covering map with Galois group $\Gamma$ because the covering
$$
\varphi\vert_{\widetilde{U}_K}\, :\, \widetilde{U}_K\,\longrightarrow\, U_K
$$
is $K_G$--equivariant and the map in \eqref{a2} is the quotient for the
actions of $K_G$.

Since
$$
({\widetilde U}_K/\Gamma)/K_G\,=\, \mathcal{U} \ ~ \text{ and }~ \
({\widetilde U}_K/\Gamma)/K_G\,=\, ({\widetilde U}_K/K_G)/\Gamma\, ,
$$
we have $\widetilde{U}/\Gamma\,=\, {\mathcal U}$. Therefore, the map
$\widetilde{\varphi}_U$ in \eqref{a3} is an \'etale Galois covering with
Galois group $\Gamma$.

The intersection of two affine open subsets of $X$ is again an affine open subset.
Take another affine open subset ${\mathcal V}\, \subset\, X$, and define
${\mathcal W}\,=\, {\mathcal U}\bigcap {\mathcal V}$. Construct $\widetilde{V}$
(respectively, $\widetilde{W}$), as done in \eqref{e7}, from ${\mathcal V}$
(respectively, ${\mathcal W}$).

Identify a point $z$ of $\widetilde{U}$ with a point $z'$ of $\widetilde{V}$ if the
$K_G$--orbit in $\widetilde{U}_K$ for $z$ and the $K_G$--orbit in $\widetilde{V}_K$
for $z'$ lie in the same $G$--orbit in $M'$. Note that the subset of $\widetilde{U}$
consisting of points whose $K_G$--orbit lie in $\theta'(\widetilde{V}\, ,G)$ actually
coincides with the subset $\widetilde{W}\, \subset\, \widetilde{U}$.

Take a finite collection of affine open subsets $\{{\mathcal U}_i\}_{i=1}^n$
of $X$ that cover $X$. Let $\widetilde{U}_i$ be the topological space constructed
as in \eqref{e7} from ${\mathcal U}_i$. Let $X'$ be the topological space constructed
by performing the above gluing on the disjoint union
$\bigsqcup_{i=1}^n \widetilde{U}_i$. Recall
that all $\widetilde{U}_i$ are connected, and for any pair $\widetilde{U}_i$ and
$\widetilde{U}_j$, the gluing between them is done over
connected open subsets of $\widetilde{U}_i$ and $\widetilde{U}_j$. Therefore, the
resulting topological space $X'$ is connected.

Recall that each $\widetilde{U}_i$ is an \'etale Galois covering of ${\mathcal U}_i$
with Galois group $\text{Gal}(\varphi)$ (via the map
$\widetilde{\varphi}_U$ in \eqref{a3}). The intersection of $\widetilde{U}_i$ with
$\widetilde{U}_j$ inside $X'$ is an \'etale Galois covering of ${\mathcal U}_i\bigcap
{\mathcal U}_j$ with Galois group $\text{Gal}(\varphi)$. Therefore, $X'$
is an \'etale Galois covering of $X$ with Galois group $\text{Gal}(\varphi)$.

Just as Theorem \ref{thm1} was proved, we now have the following:

\begin{theorem}\label{thm2}
The homomorphism between topological fundamental groups induced by the
rational map $M\, \dasharrow\, M/\!\!/G$ is an isomorphism.
\end{theorem}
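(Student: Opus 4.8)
The plan is to reproduce, in the topological setting, the argument that proved Theorem \ref{thm1}: exhibit two mutually inverse constructions relating coverings of $M$ and coverings of $X$, and deduce that the induced homomorphisms of fundamental groups are mutually inverse isomorphisms, one of which is $h'$. The genuinely new feature, which all of the preceding analytic preparation is designed to handle, is that coverings are now allowed to have infinite degree and $M'$ need not be algebraic, so one cannot form $M'/\!\!/G$ and invoke Lemma \ref{affine} directly.

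The first construction produces a covering of $M$ from a connected covering $\phi\,:\, Y\,\longrightarrow\, X$: pull $\phi$ back by the rational map $M\,\dasharrow\, X$ over the open locus $U\,\subset\, M$ on which it is defined. The indeterminacy locus $M\setminus U$ has complex codimension at least two, hence real codimension at least four; since $M$ is a manifold, the inclusion $U\,\hookrightarrow\, M$ induces an isomorphism $\pi^t_1(U)\,\longrightarrow\,\pi^t_1(M)$, so the pulled back covering extends uniquely to a covering of $M$. This is the pullback functor $\mathrm{Cov}(X)\,\longrightarrow\,\mathrm{Cov}(M)$, and it corresponds to $h'\,:\,\pi^t_1(M)\,\longrightarrow\,\pi^t_1(X)$. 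The second construction is the one carried out in the paragraphs preceding the statement: it takes a connected holomorphic covering $\varphi\,:\, M'\,\longrightarrow\, M$ and glues the pieces $\widetilde U_i\,=\,\widetilde U_{K,i}/K_G$ into a covering $X'\,\longrightarrow\, X$ with Galois group $\Gamma\,=\,\mathrm{Gal}(\varphi)$. Being compatible with the standard operations on coverings it is functorial, and since every connected covering is a quotient of a Galois one it yields a functor $\mathrm{Cov}(M)\,\longrightarrow\,\mathrm{Cov}(X)$, hence a homomorphism $H'\,:\,\pi^t_1(X)\,\longrightarrow\,\pi^t_1(M)$.

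It then remains to verify that these two constructions are mutually inverse, exactly as in the proof of Theorem \ref{thm1}. The essential point is the topological analogue of the ``Moreover'' clause of Proposition \ref{prop2}: the pullback of $X'\,\longrightarrow\, X$ along the quotient map $\beta\,:\, M^{ss}\,\longrightarrow\, X$ is the covering $\varphi\vert_{\widetilde M}\,:\,\widetilde M\,\longrightarrow\, M^{ss}$, where $\widetilde M\,=\,\varphi^{-1}(M^{ss})$. Over each affine open $\mathcal U_i$ this holds because $U_{K,i}$ is a deformation retract of $\beta^{-1}(\mathcal U_i)$ and $U_{K,i}/K_G\,\cong\,\mathcal U_i$: the covering $\widetilde U_{K,i}\,=\,\varphi^{-1}(U_{K,i})\,\longrightarrow\, U_{K,i}$ descends to $\widetilde U_i\,\longrightarrow\,\mathcal U_i$ and, pulled back along $\beta$, recovers $\varphi$ over $\beta^{-1}(\mathcal U_i)$; extending across $M\setminus M^{ss}$ by the codimension argument of the previous paragraph recovers $M'$ over $M$. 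The reverse composition is checked the same way, so $h'\circ H'\,=\,\mathrm{Id}$ and $H'\circ h'\,=\,\mathrm{Id}$, which proves the theorem.

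I expect the main obstacle to lie in this last verification, specifically in making the descent from $\widetilde U_{K,i}$ to $\widetilde U_i$ and its compatibility with the gluing precise. Because $G$ is not compact, it is Corollary \ref{cor1} that does the crucial work: the triviality in $\pi^t_1(M)$ of the image of the fundamental group of each $G$--orbit forces $\varphi$ to restrict to a trivial covering on every $K_G$--orbit, which is exactly what is needed for $\widetilde U_{K,i}/K_G$ to be an honest covering of $\mathcal U_i$ agreeing, after pullback, with $\varphi$. Neeman's deformation retraction together with the homeomorphism $U_K/K_G\,\cong\,\mathcal U$ then replaces the finite \'etale descent of Lemma \ref{affine} and bridges the compact--group quotient and the GIT quotient $X$.
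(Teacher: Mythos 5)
Your proposal is correct and takes essentially the same route as the paper: the paper's entire proof of this theorem is the remark that, given the gluing construction of the covering $X'\to X$ from $M'\to M$ carried out in the preceding paragraphs, one concludes ``just as Theorem \ref{thm1} was proved,'' i.e., by checking that this construction and the pullback-and-extend-across-codimension-two construction are mutually inverse. Your identification of the compatibility of $\widetilde U_{K,i}/K_G\to\mathcal U_i$ with $\varphi$ over $\beta^{-1}(\mathcal U_i)$ (via the deformation retraction and the triviality of $\varphi$ on $K_G$--orbits from Lemma \ref{lem2}) as the key verification supplies exactly the detail the paper leaves implicit.
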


\begin{remark}
It is easy to construct examples to show that Theorem \ref{thm0} is false if $M$ 
is not assumed to be projective. Let $G\,=\,{\mathbb G}_m$ act on $M\,=\,{\mathbb 
G}_m$ by left translations. Then the quotient $M/\!\!/G$ coincides with ${\rm 
Spec}\,k$. Clearly $\pi_1({\mathbb G}_m)\,\longrightarrow\, \pi_1({\rm Spec}\, k)$ 
is not an isomorphism. 
\end{remark}

\section{Acknowledgements}

Niels Borne pointed out a gap in the proof of Proposition \ref{prop1} given in
an earlier version. We are very grateful to him for this.
We are very grateful to Jakob Stix for some very helpful correspondences. We
thank Michel Brion for a useful correspondence. We are very grateful to the
two referees for their comments. We thank the managing editors for pointing
out \cite{Li}. The first-named author
acknowledges the support of the J. C. Bose Fellowship.

%%%%%%%%%%%%%%%%%%%%%%%%%%%%%%%%%%%%%%%%%%%%%%%%%%%%%%%%%%%%%%

\end{document}